\theoremstyle{plain}
\newtheorem{theorem}{Theorem}
\newtheoremstyle{derp}
{3pt}
{3pt}
{}
{}
{\upshape}
{:}
{.5em}
{}
\theoremstyle{derp}
\newcommand{\Z}{\mathbb{Z}}
\newcommand{\N}{\mathbb{N}}
\title{Nonexpansive chaotic almost minimal systems on residually finite groups}
\author{
Ville Salo \\
vosalo@utu.fi
}
\begin{document}
\maketitle

\begin{abstract}
The question of existence of nonexpansive chaotic almost minimal (CAM) systems, and the existence of CAM systems on every residually finite group, were raised in a recent paper of Van Cyr, Bryna Kra and Scott Schmieding. We construct nonexpansive CAM systems on all finitely-generated residually finite groups.
\end{abstract}

\section{Introduction}

Recently, the notion of a \emph{CAM} (for chaotic almost minimal) system was introduced in \cite{CyKrSc24} by Van Cyr, Bryna Kra and Scott Schmieding. These are precisely the faithful and topologically transitive compact metrizable systems which have dense periodic points and no proper infinite subsystems.

What makes these systems interesting is that the last two properties are not that commonly seen in conjunction. However, the Furstenberg $(\times 2, \times 3)$-system has these properties. Thus, it is interesting to see whether these properties already have some implications on invariant measures, as understanding these for the $(\times 2, \times 3)$-system is a major open problem.

The paper \cite{CyKrSc24} raises several technical questions. In particular, they ask whether there exists a nonexpansive CAM system, and they ask whether every residually finite group admits a CAM system. In the present paper, we prove the following theorem, which solves roughly the first question, and solves the second for finitely-generated groups.

\begin{theorem}
Let $G$ be any finitely-generated residually finite group. Then $G$ admits a nonexpansive CAM action on a compact metrizable space.
\end{theorem}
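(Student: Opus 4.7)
The plan is to realize $X$ as the orbit closure of a carefully engineered point in $K^G$, where $K$ is a compact metric alphabet whose relevant local regions have shrinking diameter; nonexpansiveness will then arise from the alphabet geometry rather than from any dynamical subtlety. I would first use residual finiteness to fix a descending chain $G = N_0 \supset N_1 \supset N_2 \supset \cdots$ of finite-index normal subgroups with $\bigcap_i N_i = \{e\}$, which governs the hierarchy of periodic orbits to embed densely in $X$.

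Next I would construct a \what $x \in K^G$ level by level. At level $i$, for each coset of $N_i$ not already handled at level $i-1$, I would fix the value of $x$ on most representatives—making $x$ ``locally $N_i$-periodic''—but deliberately leave a thin set of \emph{defect} positions where the value ranges over a subset of $K$ of diameter $\epsilon_i$, with $\epsilon_i \to 0$. Setting $X = \overline{Gx}$ yields a compact metrizable $G$-system which is automatically topologically transitive, and faithfulness can be arranged by choosing the skeleton values to distinguish group elements.

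Dense periodic points should arise as the $N_i$-periodic approximations of $x$ obtained by collapsing the level-$i$ defect values to a canonical limit point of $K$: these are limits of $G$-translates of $x$ in the product topology and hence lie in $X$, with stabilizer containing $N_i$. Nonexpansiveness follows from the shrinking-alphabet structure: the construction must ensure that $X$ contains pairs of distinct points differing only at level-$i$ defect sites for arbitrarily large $i$, whence the orbit-wise distance between them is bounded by $\epsilon_i$.

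The main obstacle is verifying that $X$ has no proper infinite closed $G$-invariant subset. Pure Toeplitz constructions are \emph{minimal}, which is incompatible with having periodic points, so genuinely new ideas are needed. The delicate engineering lies in placing the defect positions so that the $G$-orbit of \emph{any} non-periodic point $y \in X$ must encounter all levels of defect structure—forcing $\overline{Gy} = X$—while still permitting the defect-free approximations to converge to honest periodic points. Executing this balance with a carefully chosen combinatorial arrangement of defect sites adapted to the Schreier coset graphs of the $N_i$ is, I expect, the technical core of the proof.
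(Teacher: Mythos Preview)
Your plan diverges from the paper's in a structurally important way, and the divergence is exactly where the gap lies.

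The paper does \emph{not} try to build a nonexpansive CAM system directly. Instead it first constructs a CAM \emph{subshift} $X \subset \{0,1\}^G$ on a finite alphabet (hence expansive), engineered so that it contains two points $x,y$ differing only at the identity coordinate. It then passes to the quotient $X/E$ by the closed shift-invariant equivalence relation $E = \Delta_X \cup \{x,y\}^2$. Nonexpansiveness of the quotient is then a short argument (the periodic approximants to $x$ and $y$ stay uniformly close along their orbits but remain distinct), and the CAM property is inherited because every faithful factor of a CAM system is CAM. So the hard work---showing there is no proper infinite subsystem---is done once, in the expansive subshift setting, where one has the full combinatorics of words and patterns available.

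Your proposal attempts to bake nonexpansiveness in from the start via a compact-metric alphabet with shrinking defect regions. That is not unreasonable, but it means you must verify the ``no proper infinite subsystem'' property directly for a system that is already nonexpansive, and you explicitly leave this as ``the technical core'' without a mechanism. The paper's mechanism is quite specific: it inductively decomposes each high-level pattern $P_{n,a}$ into tiles from $\{P_{j,a'} : j \le k\}$ and shows that wherever a pattern forbidden by the finite union $\bigcup_{j<k} Gx_{P_{j,a'}}$ appears, a copy of $P_{k,0}$ sits within a bounded distance. This uses that finite subshifts over finitely-generated groups are of finite type. Your Toeplitz-with-defects outline does not supply an analogue of this argument, and the tension you identify (Toeplitz constructions are minimal, hence have no periodic points) is precisely the obstruction: once you insert enough irregularity to kill minimality and admit periodic points, you must separately argue that you have not created proper infinite subsystems, and nothing in your sketch does this.

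In short: the missing idea is the paper's two-step reduction---prove CAM for an expansive subshift, then quotient by a single asymptotic pair to obtain nonexpansiveness---together with the inductive decomposition lemma that controls where ``non-periodic'' patterns can occur.
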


Our systems are abstract quotients of subshifts.

We have not given much thought to whether such systems exist for non-finitely-generated groups (they ask the question in this generality). Our proof does not go through verbatim, as it uses the fact that finite systems are of finite type, which is true only on finitely-generated groups.

\section{Definitions}

By a \emph{$\Z$-system} we mean a compact metric space $X$ together with a continuous $\Z$-action, equivalently a fixed homeomorphism $\sigma : X \to X$ (which corresponds to the action of $1 \in \Z$. We write it as $(X, \sigma)$.

A \emph{periodic point} in $X$ is $x \in X$ such that $\sigma^n(x) = x$ for some $n \geq 1$, and $n$ is a \emph{period} of $x$. A $\Z$-system is \emph{faithful} if for all $n > 0$, there exists $x \in X$ such that $\sigma^n(x) \neq x$ (equivalently, no $n$ is a period for every point in $X$). It is \emph{topologically transitive} if for any nonempty open sets $U, V \subset X$, there exists $n \in \Z$ such that $\sigma^n(U) \cap V \neq \emptyset$. We say it is \emph{positively topologically transitive} if we can always take $n \geq 0$. It is \emph{expansive} if there exists $\epsilon > 0$ such that $\forall x, y: x \neq y \implies \exists n: d(\sigma^n(x), \sigma^n(y)) > \epsilon$.

A \emph{factor} of a $\Z$-system $(X, \sigma)$ is a $\Z$-system $(Y, \sigma')$ such that there is a \emph{factor map} $\phi : X \to Y$ meaning a surjective continuous map such that $\phi \circ \sigma = \sigma' \circ \phi$. Of course if such $\phi$ exists, the kernel pair $\{(x, x') \in X^2 \;|\; \phi(x) = \phi(x')\}$ is a closed $\sigma$-invariant equivalence relation. Conversely, every such relation $E \subset X^2$ gives rise to a factor $\phi : X \to X/E$. 

The notion of a \emph{CAM} (for chaotic almost minimal) system was introducd in \cite{CyKrSc24}. These systems are defined by the following properties:
\begin{itemize}
\item the action of $X$ is faithful,
\item the action of $\Z$ on $X$ is topologically transitive,
\item the periodic points are dense in $X$,
\item there is no proper infinite subsystem.
\end{itemize}
It is easy to see that when periodic points are dense, topological transitivity implies positive topological transitivity.

The \emph{full $\Z$-shift} is $(A^\Z, \sigma)$, where $A^\Z$ has the product topology, and $A$ is a finite set with the discrete topology (it is a Cantor set), and $\sigma : A^\Z \to A^\Z$ is the left shift, defined by $\sigma(x)_i = x_{i + 1}$. A \emph{subshift} is a closed shift-invariant subset of $A^\Z$ under the restricted dynamics of $\sigma$. All subshifts are expansive.

These notions can be generalized to actions of other groups $G$. In a $G$-system, $G$ acts on a compact metrizable space by homeomorphism. On the $G$-full shift $A^G$, $G$ acts by the formula $gx_h = x_{g^{-1}h}$. By a periodic point we mean one whose stabilizer is of finite index. Topological transitivity means that for any nonempty open $U, V$, we have $gU \cap V \neq \emptyset$ for some $g \in G$. This is equivalent to \emph{point transitivity} meaning the existence of $x \in X$ such that $Gx$ is dense. Expansivity means $\inf_{x \neq y} \sup_{g \in G} d(gx, gy) > 0$.

Our intervals are discrete, i.e.\ $[i, j] = \{k \in \Z \;|\; i \leq k \leq j\}$. A \emph{word} is an element of the free monoid $A^*$ generated by $A$. We write the product (which corresponds to concatenation) as just juxtaposition $u, v \mapsto uv$. It is also often useful to think of words as being \emph{positioned}, in which case they are functions $u : I \to A$ where $I \subset \Z$ is a finite interval. We can turn any positioned word into a word by moving $I$ to a canonical position such as $[0, |I|-1]$.

If $x \in A^\Z$, a \emph{subword} is a positioned word $u : I \to A$ such that $x_i = u_i$ for all $i \in I$, and we similarly define subwords of positioned words. If $u$ is a word, then $u^\Z$ is the $\sigma$-periodic point $x \in A^\Z$ defined by $x_{m|u| + i} = u_i$ for all $m \in \Z$ and $i \in [0, |u|-1]$. The \emph{length} $|u|$ of a word is the obvious one, defined inductively by $|\varepsilon| = 0$ and $|ua| = |u| + 1$ for $a \in A$, where $\varepsilon$ is the \emph{empty word} (the unique word of length $0$).

The \emph{language} of a $\Z$-subshift $X \subset A^\Z$ is $L(X) = \{u \in A^* \;|\; \exists x \in X: x_{[0,|u|-1]} = u\} \subset A^*$. The language of a $\Z$-subshift is always closed under passing to subwords, and is extendable meaning $u \in L(X) \implies \exists a, b \in A: aub \in L(X)$. Conversely, every factor-closed extendable language is the language of a unique $\Z$-subshift.


In the case of a $G$-subshift $X \subset A^G$, if $D \Subset G$ (the notation means finite subset) then $P \in A^D$ is called a \emph{pattern}. We write $x|D$ for the pattern $P \in A^D$ defined by $P_h = x_h$.

\section{A non-expansive CAM system on $\Z$}

Scott Schmieding asked whether there exists a group that admits a non-expansive CAM system. In this section we show that the integers do. In the following section, we generalize this construction for all residually finite groups.

\begin{theorem}
There is a CAM subshift on the integers, which admits a non-expansive CAM quotient.
\end{theorem}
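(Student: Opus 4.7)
The plan is to build a CAM subshift $X \subset A^\Z$ by hand from a tower of periodic configurations, and then exhibit a closed $\sigma$-invariant equivalence relation $E$ on $X$ so that the factor $Y := X/E$ is a non-expansive CAM $\Z$-system.

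\smallskip

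\noindent\emph{Step 1 (building $X$).} I would fix a rapidly growing multiplicatively nested sequence $p_1 \mid p_2 \mid \cdots$ of periods, and inductively define words $u_n \in A^{p_n}$ over a small alphabet $A$ so that $u_{n+1}$ agrees with the concatenation $u_n^{p_{n+1}/p_n}$ outside a short ``marker'' window whose position and content are chosen so that (i)~the least period of $u_{n+1}^\Z$ is exactly $p_{n+1}$, (ii)~every sufficiently long subword of any element of $X := \overline{\bigcup_n \orb(u_n^\Z)}$ contains a copy of $u_n$, and (iii)~there is a limit point $x^* \in X$, obtained by a diagonal choice of shifted $u_n^\Z$, whose orbit is dense in $X$. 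Faithfulness follows from (i); density of periodic points is built into the definition; topological transitivity follows from (iii); and (ii) implies that every non-periodic point of $X$ has orbit closure meeting every $u_n^\Z$, hence equal to $X$, so the only proper subsystems of $X$ are the finite orbits of the $u_n^\Z$. This makes $X$ a CAM subshift.

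\smallskip

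\noindent\emph{Step 2 (the quotient).} In Step 1 I would simultaneously arrange that at the last marker position of each level the letter can be taken in one of two flavors, producing a pair $x^+ \neq x^-$ of non-periodic points in $X$ which agree outside a single finite window centered at the origin. Let $E$ be the smallest closed $\sigma$-invariant equivalence relation on $X$ containing $(x^+, x^-)$. Because $x^+$ and $x^-$ agree at infinity, limits of $(\sigma^k x^+, \sigma^k x^-)$ as $|k| \to \infty$ collapse to the diagonal, so the non-diagonal part of $E$ is precisely $\{(\sigma^k x^+, \sigma^k x^-) : k \in \Z\}$ together with its inverse. The CAM axioms then descend to $Y = X/E$: periodic points are untouched by $E$ and remain dense, the action is faithful and topologically transitive via the same witnesses, and the crucial ``no proper infinite subsystem'' property follows because any $E$-saturated closed invariant infinite $Z \subsetneq X$ would have to contain $x^+$, whence Step 1(ii) forces $Z = X$.

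\smallskip

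\noindent\emph{Step 3 (non-expansivity).} I would approximate $x^\pm$ by shifted periodic points $x^+_n, x^-_n$ taken from different $u_{m(n)}^\Z$-orbits, chosen so that $x^+_n$ and $x^-_n$ agree outside an ever-larger window and $(x^+_n, x^-_n) \to (x^+, x^-)$. These pairs lie in distinct $E$-classes (each $x^\pm_n$ is periodic, while the only non-diagonal classes of $E$ are the shifts of $(x^+, x^-)$), yet the quotient distance $\sup_k \tilde d(\sigma^k[x^+_n], \sigma^k[x^-_n])$ tends to $0$, violating any proposed expansivity constant on $Y$.

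\smallskip

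\noindent\emph{Main obstacle.} The delicate point is Step 2: one must control the closure of $\{(\sigma^k x^+, \sigma^k x^-)\}$ and verify that it does not silently identify periodic orbits or other pairs in a way that introduces a proper infinite $E$-saturated invariant subset of $X$. This boils down to a careful combinatorial analysis of the marker placements in Step 1, ensuring that the $(x^+, x^-)$-discrepancy is visible only at the designated positions and is not mimicked elsewhere in the language of $X$. I expect this to be the main calculation of the proof.
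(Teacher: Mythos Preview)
Your overall architecture matches the paper's: construct a CAM subshift $X$ as the orbit closure of an explicit tower of periodic points, produce an asymptotic pair $x^\pm$ differing in a single cell, and quotient by the induced relation to kill expansivity while keeping the CAM properties. Steps~2 and~3 are essentially what the paper does (indeed your description of $E$ as the smallest closed shift-invariant relation generated by $(x^+,x^-)$ is exactly right, and the non-expansivity witness via the periodic approximants $x_n^\pm$ is the paper's argument as well).

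The genuine gap is in Step~1. Your condition~(ii), that every sufficiently long subword of $X$ contains a copy of $u_n$, does \emph{not} imply that every non-periodic orbit closure contains $u_n^\Z$. For that you need arbitrarily long $u_n$-\emph{periodic} stretches, i.e.\ $u_n^k$ in the language for all $k$, not merely bounded-gap occurrences of $u_n$. With your substitution-style recipe $u_{n+1}\approx u_n^{p_{n+1}/p_n}$ plus a short marker, the level-$(n+1)$ markers recur with period $p_{n+1}$ forever after, so the longest $u_n$-periodic stretch visible in any $u_m$, $m>n$, is bounded independently of $m$. Then $Y=\overline{\bigcup_{m\geq 2}\orb(u_m^\Z)}$ is an infinite proper subsystem that omits $u_1^\Z$, and $X$ is not CAM. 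The paper avoids exactly this trap: its word $w_{2n+a}$ is not just a perturbed power of $w_{2n-2}$ but explicitly contains $w_k^n$ for \emph{every} $k<2n$, so that arbitrarily high powers of each earlier word are forced into the language. The ``no proper infinite subsystem'' proof then hinges on a recursive decomposition of $w_{2n}$ into words $w_i$, $i\le 2k$, together with a bounded-gap lemma guaranteeing that $w_{2k}$ appears within $O(|w_{2k+2}|)$ steps of any place where the local period changes.

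So your ``main obstacle'' is misplaced: the quotient step is short and soft (the paper dispatches it in a paragraph, using only that a faithful factor of a CAM system is CAM), whereas the real work --- and the part your plan currently does not handle --- is engineering the tower so that \emph{every} earlier periodic point is recovered in the closure of \emph{every} infinite subsystem, and then proving the corresponding bounded-gap combinatorics.
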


\begin{proof}
We fix the alphabet to be $A = \{0,1\}$, and for each $n$ we will construct two words $w_{2n}, w_{2n+1} \in \{0,1\}^*$ which only differ in one letter. We start with $w_a = a$ for $a \in \{0,1\}$.

Suppose now $w_k$ has been defined for $k < 2n$. For $a \in \{0, 1\}$ we define
\[ w_{2n+a} = w_{2n-2} w_a w_{2n-2} \prod_{k = 0}^{2n-1} (w_k^n w_{2n-2}). \]
The product is taken with respect to concatenation, and ordered from left to right ($w_0^n w_{2n-2}$ being the leftmost word).

Let $X$ be the smallest subshift containing the periodic points $w_n^\Z$. We claim it is CAM.

Faithfulness: Since $w_{2n}$ and $w_{2n+1}$ clearly differ in only position $|w_{2n-2}|$, the points $x = \lim_n \sigma^{|w_{2n-2}|}(w_{2n}^\Z)$ and $y = \lim_n \sigma^{|w_{2n-2}|}(w_{2n+1}^\Z)$ differ only at the origin. From this, it is clear that the shift action is faithful on $X$. (These limits exist, but in case this is not clear to the reader, any limit points will do.)

Topological transitivity: The set of words contained in some $w_n^\Z$ is factor-closed and extendable, so the language of $X$ is indeed precisely that set of words (note that this does \emph{not} mean that $X$ does not contain other points). In the construction of $w_{2n+a}$ we include high powers of all previous words, so subwords of points $w_n^\Z$ are coincide with the words that appear in the $w_n$ themselves, and indeed with the words that appear in $w_n$ for all large enough $n$. Topological transitivity is now clear. Finally, the language also coincides with the words that appear in $w_{2n}$ for large enough $n$.

Density of periodic points: This is true by definition.

Nonexistence of infinite proper subsystems: Suppose $Y \subset X$ is infinite. We claim that then the language of $Y$ contains every word $w_{2k}$. Consider $w_{2k}$ for fixed $k$. For any $n \in \N$, we write the word $w_{2n}$ as a concatenation of positioned words $w_i$ with $i \leq 2k$ in a specific way: rewrite positioned subwords $w_m$ for $m > 2k$ inductively by using the explicit formula in the definition. Repeating this, we obtain a decomposition $w_{2n} = u_1 u_2 \cdots u_h$ where $u_i \in \{w_0, w_1, \ldots, w_{2k}\}$ for all $i \in \{1, \ldots, h\}$.

For example, let $n = 2$. If $k \geq 2$, then the decomposition of $w_4$ would be simply $w_4$. If $k = 1$, then we use the definition to rewrite $w_4 = w_{2n} = w_2 w_0 w_2 w_0^2 w_2 w_1^2 w_2 w_2^2 w_2 w_3^2 w_2$, and then further rewrite the subword $w_3 = w_{2 \cdot 1 + 1} = w_0 w_1 w_0 w_0 w_0 w_1 w_0$ to get the final decomposition
\[w_4 = w_2 w_0 w_2 w_0 w_0 w_2 w_1 w_1 w_2 w_2 w_2 w_2 w_0 w_1 w_0 w_0 w_0 w_1 w_0 w_0 w_1 w_0 w_0 w_0 w_1 w_0 w_2. \]

Let $J = 2|w_{2k+2}|$. We claim that for the decomposition of $w_{2n}$ described above, we have
\begin{itemize}
\item $u_1 = u_h = w_{2k}$
\item whenever $u_t u_{t+1} = w_i w_j$ with $i \neq j$, we have $u_s = w_{2k}$ for some $s \in \{t-J, \ldots, t+J\}$. 
\end{itemize}
This is rather trivially true for the decomposition of $w_4$ with $k = 1$, which was given above, since $J = 2|w_4|$

We prove this by induction on $n$. Both are trivially true for $n \leq k$, as then $w_{2k}$ is its own decomposition and $h = 1$. The first item to be proved clearly stays true inductively. For the second item, from the general form of $w_{2n}$, we see that if $w_i w_j$ occurs as $u_t u_{t+1}$, then
there are several cases to consider. First, it may happen that $u_t u_{t+1}$ occurs properly inside the recursive decomposition of some $w_j$-subword. In this case, the claim follows by induction as $w_{2k}$ will appear within $J$ steps even in the recursive decomposition of this $w_j$ (noting that we use the same rule to decompose $w_j$ on its own, and as a subword of $w_{2n}$).

It may also happen that $u_t u_{t+1}$ appears on the boundary between two words in the first decomposition of $w_{2n}$. This cannot happen in one of the powers $w_i^n$, namely if $i \leq 2k$ then these words are not further decomposed, and in such a position we would have $u_t =  u_{t+1}$, or if $w_i$ is further decomposed, then the first item implies that actually $u_t = u_{t+1} = w_{2k}$. The only possibility is then that $u_t u_{t+1}$ appears in the boundary of a $w_{2n-2}$-subword and some word $w_i$ (either $w_i = w_a = w_0$ is the word where $w_{2n}$ differs from $w_{2n+1}$, or $w_i$ begins or ends an $n$th power of $w_i$). Then the claim again follows from the inductive fact that $w_{2n-2}$ begins and ends with $w_{2k}$.

Now, if $Y$ is infinite, then it is not a union of orbits of points of the form $w_i^\Z$ with $i \leq 2k$, thus it must contain long subwords from words $w_{2n}$, which cannot be written as a large power of such $w_i$. Then there exists a word $v$ of length $2|w_{2k}|$ which cannot be written as such a power by the Fine-Wilf theorem. Consider any word $w \in L(Y)$ of length $2 |w_{2k+2}| |w_{2k}| + 2J$
which contains $v$ in its center. The word $w$ appears in some $w_{2n}$, and we can decompose $w_{2n}$ as above into a concatenation of words $w_i$ with $i \leq 2k$. The subword $v$ (of the subword $w$) cannot end up inside a high power of some $w_i$, $i \leq 2k$ in this decomposition, so one of the words $u_t$ at most $J = 2|w_{2k+2}|$ steps away must be equal to $w_{2k}$. The maximum length of the ``steps'' comes from the length of the words $u_t$, which is at most $|w_2k|$. Thus, by the choice of length of $w$, the word $u_t$ is contained inside $w$. Thus, $w$ contains the word $w_{2k}$.

It follows that the language of $Y$ contains every word $w_{2k}$ for all $k$. As discussed, then it contains $w_i^k$ for all $i < k$, thus all the generating periodic points.

Now we show that $X$ has a nonexpansive CAM quotient.

Let $x, y$ be the two points defined above, which differ only at the origin. Then $E = \Delta_X \cup \{x, y\}^2 \subset X^2$ is a closed shift-invariant equivalence relation when $\Delta_X = \{(z, z) \;|\; z \in X\}$. The quotient system $X/E$ is non-expansive because the images of $x_n = w_{2n}^\Z$ and $y_n = w_{2n+1}^\Z$ stay at a uniformly bounded distance throughout their orbit for large $n$ (since $(\sigma^k(x_n), \sigma^k(y_n))$ is either close to the diagonal or close to a shift of the pair $(x, y)$), but they are nevertheless distinct for all $n$.

The $\Z$-action on the quotient system is faithful since $x_n$ still has period exactly $|w_{2n}|$ in the quotient, and every faithful quotient of a CAM system is CAM.
\end{proof}

\section{Residually finite groups}

\begin{theorem}
On every finitely-generated residually finite group $G$, there is a CAM subshift which admits a non-expansive CAM quotient.
\end{theorem}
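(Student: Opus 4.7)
The plan is to lift the $\Z$-construction to $G$ via a tower of normal finite-index subgroups. Using residual finiteness together with an enumeration $g_1, g_2, \ldots$ of $G \setminus \{e\}$, I would inductively choose normal finite-index subgroups $G = G_0 \trianglerighteq G_1 \trianglerighteq G_2 \trianglerighteq \cdots$ with $g_n \notin G_n$ (so $\bigcap_n G_n = \{e\}$) and with $[G_{n-1}:G_n]$ as large as the construction below requires. I would fix right-coset transversals $T_n \subseteq G$ (so $G = \bigsqcup_{t \in T_n} G_n t$) containing $e$, arranged compatibly so that $T_{n+1} = \bigsqcup_{r \in R_n} r T_n$ for a transversal $R_n$ of $G_{n+1}$ in $G_n$ with $e \in R_n$, and so that every $g \in G$ eventually appears in some $T_n$ (achievable by adjusting $R_n$ at each step). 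A function $P : T_n \to A$ then extends uniquely to a $G_n$-periodic point $\hat P \in A^G$ via $\hat P(gt) = P(t)$ for $g \in G_n$, $t \in T_n$.

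Next, I would construct pairs $P_{2n}, P_{2n+1} : T_n \to \{0, 1\}$ inductively in direct analogy with the $\Z$-case, starting from $P_0(e) = 0$, $P_1(e) = 1$. Given all lower-level patterns, I define $P_{2n+a}$ by filling each tile $r T_{n-1}$ of the decomposition $T_n = \bigsqcup_{r \in R_{n-1}} r T_{n-1}$ with a previously constructed pattern on $T_{n-1}$: the distinguished tile $e T_{n-1} = T_{n-1}$ receives $P_{2(n-1)+a}$ (so the twist propagates recursively down to $e$), certain prescribed ``marker'' tiles receive $P_{2(n-1)}$, and the remaining tiles receive the restrictions $\hat P_{2k}|_{T_{n-1}}$ for $k = 0, \ldots, n-2$, each used at least $n$ times. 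Let $X \subseteq A^G$ be the smallest subshift containing all $\hat P_n$. Faithfulness, density of periodic points, and positive topological transitivity transfer from the $\Z$-proof essentially verbatim: limit points $x, y \in X$ of $(\hat P_{2n})_n$ and $(\hat P_{2n+1})_n$ differ at exactly $e \in G$; every subpattern of every $\hat P_k$ occurs in $\hat P_{2n}$ for $n$ large; and periodic points are dense by construction. The non-expansive CAM quotient is then obtained from the closed, shift-invariant equivalence $\Delta_X \cup \{x, y\}^2$ and remains faithful because each $\hat P_{2n}$ keeps its exact $G_n$-stabilizer in the quotient.

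The hard part will be verifying that $X$ has no proper infinite subsystem. The $\Z$-proof rested on two ingredients: a canonical tile-decomposition of $w_{2n}$ into low-level tiles in which $w_{2k}$ occurs near every pair of unequal adjacent tiles, and the Fine--Wilf theorem, used to force a long-enough non-power-like word to witness such an unequal adjacency. I would formulate and prove group analogues by induction on $n$: first, a recursive decomposition of $\hat P_{2n}$ into left translates of $T_j$-tiles with $j \leq k$ in which any two neighboring distinctly-labeled tiles (meeting along a common boundary piece, measured in a fixed Cayley-graph word metric on $G$, which exists since $G$ is finitely generated) lie within bounded word-metric distance of a $P_{2k}$-tile; and second, the elementary replacement for Fine--Wilf, namely that a pattern on a sufficiently large finite $D \subseteq G$ which is $G_i$- and $G_j$-invariant on overlapping regions of $D$ must be $(G_i \cap G_j)$-invariant, so only finitely many orbits of such patterns exist at each fixed level. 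An infinite $Y \subsetneq X$ would then contain arbitrarily large patterns failing low-level periodicity, and the decomposition would force every $P_{2k}$ into the language of $Y$, giving $Y = X$. Making the canonical tile-decomposition precise without the convenient linear order of $\Z$, and in particular defining ``neighboring'' tiles and the corresponding bounded-distance property correctly, is where I expect the technical work to concentrate.
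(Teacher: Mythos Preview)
Your overall architecture---tower of finite-index subgroups, compatible transversals, pairs of patterns $P_{2n},P_{2n+1}$ differing only at $e$, subshift generated by the associated periodic points, quotient by $\Delta_X\cup\{x,y\}^2$---is the same as the paper's, and the treatment of faithfulness, transitivity, density of periodic points, and the non-expansive quotient goes through as you say.

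The substantive divergence is in the ``no proper infinite subsystem'' step. The paper does \emph{not} attempt a Fine--Wilf analogue. Instead it invokes the lemma that on a finitely-generated group every \emph{finite} subshift is of finite type. Applied to the finite union of low-level periodic orbits $\{g\cdot \hat P_{j,a}: j<k,\ a\in\{0,1\},\ g\in G\}$, this yields a finite set $\mathcal{Q}$ of forbidden patterns of some bounded radius $r$: any point not in that union contains a translate of some $Q\in\mathcal{Q}$. The inductive decomposition then only needs to show that in every $P_{n,a}$, within bounded word-metric distance of any $\mathcal{Q}$-occurrence (and of any point near $\partial T_n$) there sits a copy of $P_{k,0}$. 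This replaces both the Fine--Wilf step and your ``neighboring distinctly-labeled tiles'' bookkeeping in one stroke.

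Your proposed Fine--Wilf replacement, as written, does not do the required work. Because your subgroups are nested, $G_i\cap G_j=G_{\max(i,j)}$, so ``$G_i$- and $G_j$-invariant $\Rightarrow$ $(G_i\cap G_j)$-invariant'' is a tautology and cannot bound the number of orbits. What you actually need is a local-to-global statement: if every ball of sufficiently large radius in $y$ agrees with a translate of some low-level periodic point, then $y$ is globally such a translate. That statement is exactly ``the finite subshift of low-level periodic points is an SFT,'' which is the paper's lemma; proving it directly requires tracking how overlapping windows force the \emph{same} coset representative, not merely a common period, and this is where a naive Fine--Wilf analogy breaks down on a general group. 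If you replace your Fine--Wilf paragraph by an appeal to (or proof of) that SFT fact, the rest of your outline matches the paper's argument.
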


\begin{proof}
Let us modify the construction to work on general finitely-generated residually finite groups. We will first make a subshift, again with alphabet $A = \{0,1\}$. Our shift convention is $gx_h = x_{g^{-1}h}$. 
We identify $G$ with its right Cayley graph, with nodes $G$ and an edge between $g$ and $gs$ for each generator $s$ in some symmetric generating set, and we consider on $G$ the word metric $d$ from this generating set, i.e.\ path metric in the Cayley graph. Note that this metric is proper, meaning all balls a finite.

Next, use residual finiteness and countability to obtain a decreasing sequence of finite index subgroups $G_i \leq G$ with $\bigcap_i G_i = \{e_G\}$. Pick right coset representatives $R_i$ for $G_i$ inside $G_{i-1}$ so $G_{i-1} = G_i R_i$ (let $G_0 = G$ and $R_0 = \{e_G\}$ for notational convenience). Then $G_i$ has right coset representative $T_i = R_i R_{i-1} \ldots R_1$ inside $G$. Note that by picking $G_{n+1}$ disjoint from a very large $e_G$-centered ball, and picking $R_{i+1}$ in a greedy fashion (always picking the representative closest to the identity until we have a full set of coset representatives), we may assume that $T_n$ contains arbitrarily many arbitrarily large balls at arbitrarily large distances from each other, and at arbitrarily large distances from its boundary. We will make this more quantitative in the analysis below.

Now, we inductively pick a set of points with stabilizer $G_i$. Note that such a point can be identified with a pattern $P$ on $T_i$. Specifically for a pattern $P \in A^{T_i}$, we define $x = x_P \in A^G$ by the formula $x|g T_i = g P$ for all $g \in G_i$ (patterns are shifted in the obvious way: if the domain of $P$ is $D$, that of $gP$ is $gD$, and $gP_{gh} = P_h$). Then $x$ is $G_i$-periodic. Namely, for $g, g' \in G_i, t \in T_i$ we have
\[ gx_{g't} = x_{g^{-1}g't} = g^{-1}g' P_{g^{-1}g' t} = g' P_{g' t} = x_{g' t}. \]

Our subshift will be generated by points $x_P$, where $P \in \mathcal{P}$, where $\mathcal{P}$ contains exactly two patterns $P_{i, 0}, P_{i, 1} \in A^{T_i}$ for each $i = 0, 1, \ldots$. We start with $P_{0,0} = 0, P_{0,1} = 1$ seen as patterns in $A^{R_0}$. (This effectively puts the all-$0$ point and all-$1$ point into the subshift.) The points $x_{P_{k, 0}}, x_{P_{k, 1}}$ will be the analogs of the points $w_{2k}^\Z, w_{2k+1}^\Z$ in the construction we did on $\Z$.

On the level $i+1$, as discussed above we may pick the $G_{i+1}$ to be very sparse compared to $G_i$, so as to have many disjoint large balls inside $T_{i+1}$. In particular, for each pattern $P_{j,a} \in \mathcal{P}$ with $j \leq i$, we may assume we have a separate large ball which we can fill with translates of this pattern, and which is far from the boundary of $T_{i+1}$, and also far from the identity element.

Now, for each previous pattern $P_{j, a}$ we take a large ball $B = B_{j, a} \subset T_{i+1}$ (so that $B_{j, a}$ is far from any other $B_{j', a'}$), and for each $g$ such that $gT_i \subset B$ we recursively split $gT_i$ into shapes $g h R_j$ where $h \in R_i R_{i-1} \cdots R_{j+1}$, and set $P_{i+1, a}|ghR_j = ghP_{j, a}$.
Next, we set $P_{i+1,a}|R_i = P_{i,a}$ (this is the only difference between $P_{i+1,0}$ and $P_{i+1,1}$). Finally, the remaining $gT_i \subset T_{i+1}$ are filled with translates of $P_{i, 0}$. This is illustrated in Figure~\ref{fig:Illustration} for $G = \Z^2$ and $T_i$ large squares.

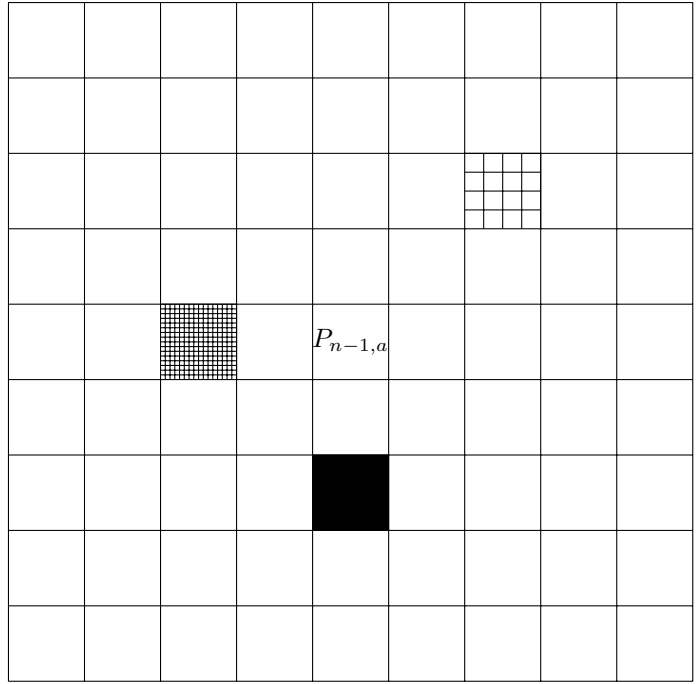
\begin{figure}
\centering
\begin{tikzpicture}
\draw (-1,-1) grid (8,8);
\draw[step = 0.25] (5,5) grid (6,6);
\draw[step = 0.0625] (1,3) grid (2,4);
\draw[fill] (3,1) rectangle (4,2);
\node () at (3.5,3.5) {$P_{n-1, a}$};
\end{tikzpicture}
\caption{An illustration of the construction of $P_{n,a}$. The large white rectangles are filled with $P_{n-1,0}$, and the denser subgrids are filled periodicially with $P_{j',a'}$ for all the previous $j', a'$. At the center we put $P_{n-1,a}$.}
\end{figure}

The argument that the subshift $X$ generated by the points $x_P$ with $P \in \mathcal{P}$ is CAM is exactly analogous to the $\Z$-case. The non-trivial point is that every infinite subsystem contains all of $X$. Thus, suppose $Y \subset X$ is an infinite subshift, and consider some pattern $P_{k, 0}$. The subshift $Y$ is not contained in the union of the orbits of the $x_P$ with $P \in \mathcal{P}' = \{P_{j, a} \;|\; j < k, a\in \{0,1\}\}$. Namely, such a union is finite.

Since every finite subshift on a finitely-generated group is of finite type, every point that is not in an orbit of the $x_P$ with $P \in \mathcal{P}'$ contains a (translate of a) pattern of some bounded radius $r$ (of which there are finitely many, since the metric is proper) from some finite set of forbidden patterns $\mathcal{Q}$. We show that as long as the separations used in the construction are sufficient starting from level $k$, all the patterns $P_{n, a}$ will contain translates of $P_{k,0}$ with bounded graps. Then if the separations are sufficient on all levels, this is true for all $k$ independently.

For the specific $k$, we claim that, analogously to the $\Z$-case, indeed with sufficiently separated choices of $G_{i+1}$ and positioning of the periodic areas above, the following holds true by induction: there exists $R > 0$ such that for all $n$,
\begin{itemize}
\item whenever $g \in T_n$ is at distance at most $2r$ from an element of $G \setminus T_n$, there is an appearance of $P_{k, 0}$ at distance at most $R$ from $g$ in the pattern $P_{n, a}$, and
\item whenever one of the patterns $\mathcal{Q}$ appears at some $g \in T_n$ (inside $P_{n,a}$), there is an appearance of $P_{k, 0}$ at distance at most $R + r$ from $g$.
\end{itemize}

We can simply take $R$ the diameter of $P_{k+1, 0}$ to guarantee this for $n = k+1$. The first item stays true trivially, since when constructing $P_{n+1,a}$ from previous patterns, outside a small area in the center of $T_{n+1}$ where we put the balls $B$ containing other patterns $P_{i, a'}$, we simply repeat the pattern $P_{n, 0}$ where the property holds, so in particular near the boundary we only see the boundary of $P_{n,0}$-patterns. As for the second item, the forbidden patterns will always appear either properly inside one of the $P_{n-1, 0}$-patterns and the property follows from the second item by induction; or they intersect such a pattern properly, and the property again follows from the first item; or they are properly inside a repetition of $P_{j, a'}$ for lower $j$. In this last case, we cannot have have $j < k$ by the choice of $Q$, and thus again either the pattern appears properly inside $P_{j, a'}$ and the claim follows from the second item by induction; or it appears near the boundary and follows from the first item by induction.

A nonexpansive quotient is obtained exactly analogously to the $\Z$-case, by abstractly quotienting out the pair of points differing in exactly one position; and such a pair is in turn obtained as a limit point of the patterns $P_{n,0}, P_{n,1}$.
\end{proof}

\bibliographystyle{plain}
\bibliography{../../../bib/bib}{}

\end{document}